\begin{document}
\newtheorem{theo}{Theorem}
\newtheorem{exam}{Example}
\newtheorem{coro}{Corollary}
\newtheorem{defi}{Definition}
\newtheorem{prob}{Problem}
\newtheorem{lemm}{Lemma}
\newtheorem{prop}{Proposition}
\newtheorem{rem}{Remark}
\newtheorem{conj}{Conjecture}
\newtheorem{calc}{}


\def\gru{\mu} 
\def\pg{{ \sf S}}               
\def\TS{\mathlarger{\bf T}}                
\def\NB{{\mathlarger{\bf N}}}
\def\group{{\sf G}}
\def\NLL{{\rm NL}}   

\def\plc{{ Z_\infty}}    
\def\pola{{u}}      
\newcommand\licy[1]{{\mathbb P}^{#1}} 
\newcommand\aoc[1]{Z^{#1}}     
\def\HL{{\rm Ho}}     
\def\NLL{{\rm NL}}   

\def\Z{\mathbb{Z}}                   
\def\Q{\mathbb{Q}}                   
\def\C{\mathbb{C}}                   
\def\N{\mathbb{N}}                   
\def\uhp{{\mathbb H}}                
\def\A{\mathbb{A}}                   
\def\dR{{\rm dR}}                    
\def\F{{\cal F}}                     
\def\Sp{{\rm Sp}}                    
\def\Gm{\mathbb{G}_m}                 
\def\Ga{\mathbb{G}_a}                 
\def\Tr{{\rm Tr}}                      
\def\tr{{{\mathsf t}{\mathsf r}}}                 
\def\spec{{\rm Spec}}            
\def\ker{{\rm ker}}              
\def\GL{{\rm GL}}                
\def\ker{{\rm ker}}              
\def\coker{{\rm coker}}          
\def\im{{\rm Im}}               
\def\coim{{\rm Coim}}            
\def\p{{\sf  p}}
\def\U{{\cal U}}   

\def\weig{{\nu}}
\def\r{{ r}}                       
\def\k{{\sf k}}                     
\def\ring{{\sf R}}                   
\def\X{{\sf X}}                      
\def\Ua{{   L}}                      
\def\T{{\sf T}}                      
\def\asone{{\sf A}}                  

\def\Ts{{\sf S}}
\def\cmv{{\sf M}}                    
\def\BG{{\sf G}}                       
\def\podu{{\sf pd}}                   
\def\ped{{\sf U}}                    
\def\per{{\bf  P}}                   
\def\gm{{  A}}                    
\def\gma{{\sf  B}}                   
\def\ben{{\sf b}}                    

\def\Rav{{\mathfrak M }}                     
\def\Ram{{\mathfrak C}}                     
\def\Rap{{\mathfrak G}}                     

\def\mov{{\sf  m}}                    
\def\Yuk{{\sf C}}                     
\def\Ra{{\sf R}}                      
\def\hn{{ h}}                         
\def\cpe{{\sf C}}                     
\def\g{{\sf g}}                       
\def\t{{\sf t}}                       
\def\pedo{{\sf  \Pi}}                  

\def\Der{{\rm Der}}                   
\def\MMF{{\sf MF}}                    
\def\codim{{\rm codim}}                
\def\dim{{\rm    dim}}                
\def\Lie{{\rm Lie}}                   
\def\gg{{\mathfrak g}}                

\def\u{{\sf u}}                       

\def\imh{{  \Psi}}                 
\def\imc{{  \Phi }}                  
\def\stab{{\rm Stab }}               
\def\Vec{{\rm Vec}}                 
\def\prim{{\rm  0}}                  

\def\Fg{{\sf F}}     
\def\hol{{\rm hol}}  
\def\non{{\rm non}}  
\def\alg{{\rm alg}}  
\def\tra{{\rm tra}}  

\def\bcov{{\rm \O_\T}}       

\def\leaves{{\cal L}}        

\def\cat{{\cal A}}              
\def\im{{\rm Im}}               

\def\pn{{\sf p}}              
\def\Pic{{\rm Pic}}           
\def\free{{\rm free}}         
\def \NS{{\rm NS}}    
\def\tor{{\rm tor}}
\def\codmod{{\xi}}    

\def\GM{{\rm GM}}

\def\perr{{\sf q}}        
\def\perdo{{\cal K}}   
\def\sfl{{\mathrm F}} 
\def\sp{{\mathbb S}}  

\newcommand\diff[1]{\frac{d #1}{dz}} 
\def\End{{\rm End}}              

\def\sing{{\rm Sing}}            
\def\cha{{\rm char}}             
\def\Gal{{\rm Gal}}              
\def\jacob{{\rm jacob}}          
\def\tjurina{{\rm tjurina}}      
\newcommand\Pn[1]{\mathbb{P}^{#1}}   
\def\P{\mathbb{P}}
\def\Ff{\mathbb{F}}                  

\def\O{{\cal O}}                     

\def\ring{{\mathsf R}}                         
\def\R{\mathbb{R}}                   

\newcommand\ep[1]{e^{\frac{2\pi i}{#1}}}
\newcommand\HH[2]{H^{#2}(#1)}        
\def\Mat{{\rm Mat}}              
\newcommand{\mat}[4]{
     \begin{pmatrix}
            #1 & #2 \\
            #3 & #4
       \end{pmatrix}
    }                                
\newcommand{\matt}[2]{
     \begin{pmatrix}                 
            #1   \\
            #2
       \end{pmatrix}
    }
\def\cl{{\rm cl}}                

\def\hc{{\mathsf H}}                 
\def\Hb{{\cal H}}                    
\def\pese{{\sf P}}                  

\def\PP{\tilde{\cal P}}              
\def\K{{\mathbb K}}                  

\def\M{{\cal M}}
\def\RR{{\cal R}}
\newcommand\Hi[1]{\mathbb{P}^{#1}_\infty}
\def\pt{\mathbb{C}[t]}               
\def\gr{{\rm Gr}}                
\def\Im{{\rm Im}}                
\def\Re{{\rm Re}}                
\def\depth{{\rm depth}}
\newcommand\SL[2]{{\rm SL}(#1, #2)}    
\newcommand\PSL[2]{{\rm PSL}(#1, #2)}  
\def\Resi{{\rm Resi}}              

\def\L{{\cal L}}                     
\def\Aut{{\rm Aut}}              
\def\any{R}                          
\newcommand\ovl[1]{\overline{#1}}    

\newcommand\mf[2]{{M}^{#1}_{#2}}     
\newcommand\mfn[2]{{\tilde M}^{#1}_{#2}}     

\newcommand\bn[2]{\binom{#1}{#2}}    
\def\ja{{\rm j}}                 
\def\Sc{\mathsf{S}}                  
\newcommand\es[1]{g_{#1}}            
\newcommand\V{{\mathsf V}}           
\newcommand\WW{{\mathsf W}}          
\newcommand\Ss{{\cal O}}             
\def\rank{{\rm rank}}                
\def\Dif{{\cal D}}                   
\def\gcd{{\rm gcd}}                  
\def\zedi{{\rm ZD}}                  
\def\BM{{\mathsf H}}                 
\def\plf{{\sf pl}}                             
\def\sgn{{\rm sgn}}                      
\def\diag{{\rm diag}}                   
\def\hodge{{\rm Hodge}}
\def\HF{{ F}}                                
\def\WF{{ W}}                               
\def\HV{{\sf HV}}                                
\def\pol{{\rm pole}}                               
\def\bafi{{\sf r}}
\def\id{{\rm id}}                               
\def\gms{{\sf M}}                           
\def\Iso{{\rm Iso}}                           

\def\hl{{\rm L}}    
\def\imF{{\rm F}}
\def\imG{{\rm G}}

\begin{center}
{\LARGE\bf  Periods of linear algebraic cycles
}
\\
\vspace{.25in} {\large {\sc Hossein Movasati, Roberto  Villaflor Loyola}}\footnote{
Instituto de Matem\'atica Pura e Aplicada, IMPA, Estrada Dona Castorina, 110, 22460-320, Rio de Janeiro, RJ, Brazil,
{\tt www.impa.br/$\sim$ hossein, hossein@impa.br, rvilla@impa.br}}
\end{center}


\def\pn{{\sf p}}
\def\rootsG{{\sf G}}
\def\NLL{{\rm NL}}   

\begin{abstract}
In this article we use a theorem of Carlson and Griffiths and 
compute periods of linear algebraic cycles $\Pn{\frac{n}{2}}$ inside the Fermat variety of even 
dimension $n$ and degree $d$. As an application, for examples  of $n$ and $d$,  
we prove that the locus of hypersurfaces containing two linear cycles whose intersection is of low dimension, 
is a reduced  component of the Hodge locus in the underlying 
parameter space. We also check the same statement for hypersurfaces containing a complete intersection algebraic cycle.  
Our result confirms the Hodge conjecture for Hodge cycles
obtained by the monodromy of the homology class of such algebraic cycles. This is known as the variational Hodge conjecture. 
\end{abstract}
\section{Introduction}

\def\group{\mu}                 
\def\pg{{ \sf S}}               

Let us consider the even dimensional Fermat variety
\begin{equation}
\label{27nov2015}
X^d_n\subset \Pn {n+1}:   \ \  x_0^{d}+x_1^{d}+\cdots+x_{n+1}^d=0. 
\end{equation}
It has the following  linear algebraic cycles of dimension $\frac{n}{2}$:
\begin{equation}
\label{22jan2019}
 \licy{\frac{n}{2}}_{a,b}:  
\left\{
 \begin{array}{l}
 x_{b_0}-\zeta_{2d}^{1+2a_1}x_{b_1}=0,\\
 x_{b_2}-\zeta_{2d}^{1+2a_3} x_{b_3}=0,\\
 x_{b_4}-\zeta_{2d}^{1+2a_5} x_{b_5}=0,\\
 \cdots \\
 x_{b_n}-\zeta_{2d}^{1+2a_{n+1}} x_{b_{n+1}}=0,
 \end{array}
 \right. 
\end{equation}
where $\zeta_{2d}$ is a $2d$-primitive root of 
unity, $b$ is a permutation of $\{0,1,2,\ldots,n+1\}$ and  $0\leq a_i\leq d-1$ are integers.
In order to get distinct cycles we may further assume that $b_0=0$ and for $i$ an even number $b_i$ is the smallest number in
$\{0,1,\ldots,n+1\}\backslash\{b_0,b_1,b_2,\ldots, b_{i-1}\}$. 
It is easy
to see that the number of such cycles is $1\cdot 3\cdots (n-1)(n+1) d^{\frac{n}{2}+1}$ (for $d=3, n=2$ this is the famous $27$ lines in a smooth cubic surface). 
In this article we use a theorem of Carlson and Griffiths in \cite{CarlsonGriffiths1980} and prove the following: 
\begin{theo}
\label{14out2016}
For non-negative integers $i_0,i_1,\cdots, i_{n+1}$  with $\sum_{k=0}^{n+1}i_k=(\frac{n}{2}+1)d-n-2$, we have
\begin{equation}
\label{21march2017}
\frac{1}{(2\pi \sqrt{-1})^{\frac{n}{2}}}\mathlarger{\mathlarger{\int}}_{\licy{\frac{n}{2}}_{a,b}}
{\rm Residue}\left(  
\frac{x_0^{i_0}x_1^{i_1}\cdots x_{n+1}^{i_{n+1}}\cdot  \sum_{i=0}^{n+1}(-1)^ix_i\widehat{dx_i}}
     {( x_0^{d}+x_1^{d}+\cdots+x_{n+1}^d)^{\frac{n}{2}+1}}\right)
= 
\end{equation}
$$
\left\{
	\begin{array}{ll}
		   \frac{{\rm sign}(b)}{d^{\frac{n}{2}+1}\cdot \frac{n}{2}!} 
		   {\zeta_{2d}}^{ \sum_{e=0}^\frac{n}{2} (i_{b_{2e}}+1)\cdot (1+2a_{2e+1})}  & \mbox{if } \ \ \  
		   \ i_{b_{2e-2}}+i_{b_{2e-1}}=d-2, \ \ \ \forall e=1,...,\frac{n}{2}+1, \\
		0 & \mbox{otherwise. } 
	\end{array}
\right.
$$
where $\zeta_{2d}=e^{\frac{\pi i}{d}}$ is the $2d$-th primitive root of unity.
\end{theo}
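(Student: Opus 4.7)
My plan is to combine the Noether decomposition of the Fermat polynomial adapted to the cycle with the Carlson--Griffiths formula from \cite{CarlsonGriffiths1980}. The crucial identity is that $\zeta_{2d}^d=-1$, which gives, for every $e=0,\ldots,\frac{n}{2}$,
\[
 x_{b_{2e}}^d+x_{b_{2e+1}}^d \;=\; x_{b_{2e}}^d-\bigl(\zeta_{2d}^{1+2a_{2e+1}}x_{b_{2e+1}}\bigr)^d \;=\; g_e\cdot h_e,
\]
with $g_e=x_{b_{2e}}-\zeta_{2d}^{1+2a_{2e+1}}x_{b_{2e+1}}$ the $e$-th linear form cutting out $\licy{\frac{n}{2}}_{a,b}$, and $h_e=\sum_{j=0}^{d-1}x_{b_{2e}}^{d-1-j}\zeta_{2d}^{(1+2a_{2e+1})j}x_{b_{2e+1}}^{j}$ of degree $d-1$. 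Summing over $e$ produces the Noether representation $F=\sum_e g_eh_e$ in which the $g_e$'s are exactly the defining linear forms of the cycle.

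Next I would apply the Carlson--Griffiths theorem to this decomposition. It turns the period on the left-hand side of (\ref{21march2017}) into an explicit algebraic quantity: up to a universal constant of shape $\pm(2\pi\sqrt{-1})^{\frac{n}{2}}/(\frac{n}{2})!$ and a factor ${\rm sign}(b)$ coming from reordering the variables in the Euler form $\Omega=\sum_i(-1)^ix_i\widehat{dx_i}$, the period equals the integral over $\licy{\frac{n}{2}}$ of $P|_Z/(h_0|_Z\cdots h_{n/2}|_Z)$ paired with the canonical Euler form of $\licy{\frac{n}{2}}$, where $P=x_0^{i_0}\cdots x_{n+1}^{i_{n+1}}$.

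To evaluate this integral, I parametrize the cycle by $y_e:=x_{b_{2e+1}}$, so that $x_{b_{2e}}=\zeta_{2d}^{1+2a_{2e+1}}y_e$ on $Z$. A short geometric-series calculation using $\zeta_{2d}^d=-1$ gives $h_e|_Z=-d\,\zeta_{2d}^{-(1+2a_{2e+1})}\,y_e^{d-1}$, while $P|_Z=\prod_e\zeta_{2d}^{(1+2a_{2e+1})i_{b_{2e}}}\,y_e^{\,i_{b_{2e}}+i_{b_{2e+1}}}$. Forming the quotient produces, up to the constant from the previous step,
\[
 \frac{1}{d^{\frac{n}{2}+1}}\,\zeta_{2d}^{\sum_e (i_{b_{2e}}+1)(1+2a_{2e+1})}\,\prod_e y_e^{\,i_{b_{2e}}+i_{b_{2e+1}}-(d-1)},
\]
which matches precisely the root-of-unity factor predicted by the theorem. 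The degree constraint $\sum_k i_k=(\frac{n}{2}+1)d-n-2$ forces the total degree of the $y_e$-monomial to be $-(\frac{n}{2}+1)$; by the standard logarithmic-residue computation on $\licy{\frac{n}{2}}$, the integral vanishes unless every exponent equals $-1$, i.e.\ $i_{b_{2e}}+i_{b_{2e+1}}=d-2$ for every $e$, and in that case evaluates (up to sign) to $(2\pi\sqrt{-1})^{\frac{n}{2}}$. Collecting the constants and dividing by $(2\pi\sqrt{-1})^{\frac{n}{2}}$ yields the closed formula in the statement.

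The main obstacle is the precise invocation of the Carlson--Griffiths formula, in particular pinning down the orientation and sign conventions that produce the global factor $(-1)^{\frac{n}{2}}$ and combine correctly with the $(-1)^{\frac{n}{2}+1}$ arising from $\alpha_e^{d-1}=-\alpha_e^{-1}$. Once the normalization constant is fixed, the remaining steps are elementary manipulations with roots of unity and the binomial symmetry of the Fermat polynomial.
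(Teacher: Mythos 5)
Your computation on the cycle itself is sound: the decomposition $f=\sum_e g_eh_e$ with $g_e=x_{b_{2e}}-\zeta_{2d}^{1+2a_{2e+1}}x_{b_{2e+1}}$, the evaluation $h_e|_Z=-d\,\zeta_{2d}^{-(1+2a_{2e+1})}y_e^{d-1}$, and the resulting root-of-unity factor and vanishing criterion $i_{b_{2e}}+i_{b_{2e+1}}=d-2$ all check out, and they mirror what happens in the paper after pullback. The genuine gap is your second step. The Carlson--Griffiths theorem you cite does not say that the period equals a universal constant times $\int_{\Pn{\frac{n}{2}}}\frac{P|_Z\,\Omega'}{h_0|_Z\cdots h_{\frac{n}{2}}|_Z}$; it only provides a \v{C}ech cocycle representative of $\omega_i$ with respect to the Jacobian covering $U_j=\{\partial f/\partial x_j\neq 0\}$, namely $\frac{(-1)^{n+\binom{k+1}{2}}}{k!}\left\{\frac{x^i\Omega_j}{f_j}\right\}_{|j|=k}$. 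A formula adapted to a Noether decomposition $f=\sum_e g_eh_e$ (valid for complete intersection cycles) is a separate, nontrivial statement that you assert without proof; it does not follow formally from the cited theorem, and deriving it is essentially where all the work lies. In the paper this bridge is built concretely: one reduces by the automorphism group to $a=0$, $b=\mathrm{id}$, pulls back the Jacobian-covering cocycle along the explicit parametrization $\phi[y_1:\cdots:y_{\frac{n}{2}+1}]=[\zeta_{2d}y_1:y_1:\cdots:\zeta_{2d}y_{\frac{n}{2}+1}:y_{\frac{n}{2}+1}]$, shows $\phi^*\Omega_j=0$ whenever $j$ meets some pair $\{2l-2,2l-1\}$ twice, tracks the sign $(-\zeta_{2d})^{j_{\rm odd}}(-1)^{\binom{n/2}{2}+1}$ for the surviving components, and lands on the standard covering of $\Pn{\frac{n}{2}}$ before applying $\int_{\Pn{\frac{n}{2}}}\frac{\Omega'}{y_1\cdots y_{\frac{n}{2}+1}}=(-2\pi\sqrt{-1})^{\frac{n}{2}}$.

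Relatedly, you leave the ``universal constant of shape $\pm(2\pi\sqrt{-1})^{\frac{n}{2}}/\frac{n}{2}!$'' and the sign conventions unfixed, and you acknowledge this. But the exact constant $\frac{{\rm sign}(b)(-1)^{\frac{n}{2}}}{d^{\frac{n}{2}+1}\cdot\frac{n}{2}!}$ is precisely the content of the theorem (the paper stresses computing the periods ``without any constant ambiguity,'' since the later rank computations depend on the actual values); a proof that determines the answer only up to an undetermined sign and constant does not establish the statement. To close the gap you would either have to carry out the \v{C}ech pullback bookkeeping as the paper does, or first prove the decomposition-adapted period formula (with its constant) as an independent lemma --- the latter is a genuinely interesting route, since it generalizes to complete intersection cycles, but it is not available from the reference as invoked.
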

For the residue map see \S\ref{6oct2017}. 
Using Theorem \ref{14out2016} we can prove a stronger version of the variational Hodge conjecture for many  algebraic cycles, see \cite[page 103]{gro66}.
We content ourselves with the class of examples in Theorem \ref{27.03.17}. A complete list of cases will appear in another publication.
Recall that a stronger version of the variational Hodge conjecture (alternative Hodge conjecture in
\cite[\S 18.2]{ho13}) holds for an algebraic cycle $Z$ of codimension $\frac{n}{2}$ inside a  
smooth hypersurface of degree $d$ and dimension $n$, 
if  deformations of $Z$ as an algebraic cycle and Hodge cycle are the same.  
Let $\T$ be the open subset of $\C[x]_d$
parameterizing smooth hypersurfaces of degree $d$.  We use the notation $X_t,\ t\in\T$ and denote by $0\in\T$ the point corresponding to Fermat variety. 
We also denote by $\plc$  the trivial algebraic cycle in $X$ obtained by intersecting  a projective space $\Pn {\frac{n}{2}+1}\subset \Pn {n+1}$ 
with $X$.  For the definition of a Hodge cycle   and Hodge locus 
see \S\ref{28.03.2017}. As a corollary of Theorem \ref{14out2016} we get:   
\begin{theo}
\label{27.03.17}
Let $\check\T$ be the subvariety of $\T$ parametrizing hypersurfaces containing two linear cycle $\P^\frac{n}{2}$ and  $\check\P^\frac{n}{2}$ with 
$\P^\frac{n}{2}\cap\check\P^\frac{n}{2}=\P^m$. There is a Zariski open (and hence dense) subset $U$ of $\check\T$ such that  
the variational Hodge conjecture is true for $Z:=\P^\frac{n}{2}+\check\P^\frac{n}{2}\in X:=X_t,\ t\in U$  
with the triples $(n,d,m)$:
\begin{eqnarray*}
& &(2,d,-1),\ 5\leq d\leq 14, \\
& & (4, 4,-1), (4,5,-1), (4,6,-1), (4,5,0),  (4,6,0), \\
& & (6,3, -1), (6,4, -1), (6,4,0), \\ 
& & (8,3,-1),(8,3,0),\\
& & (10,3,-1), (10,3,0), (10,3,1), 
\end{eqnarray*}
where $\P^{-1}$ means the empty set. In particular,  if another  algebraic cycle $\check Z\subset X$  of dimension $\frac{n}{2}$ in $X$ 
satisfies $[\check Z]=b [Z]+c[\plc]$ in $H_n(X,\Q)$ for some $b,c\in\Q,\ b\not=0$, 
 then the pair $(X,\check Z)$ cannot be deformed to $(X_t,\check Z_t)$  with $t\in \T\backslash \check\T$.
\end{theo}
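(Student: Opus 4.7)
The plan is to establish that $\check{\T}$ is a reduced irreducible component of the Hodge locus $\HL([Z])$ associated to the class $[Z]=[\P^{\frac{n}{2}}]+[\check{\P}^{\frac{n}{2}}]$, locally around the Fermat point $0\in\check{\T}$. From this, both the variational Hodge statement on a Zariski neighborhood of $\check{\T}$ and the final assertion about $\check Z$ will follow by standard arguments. The inclusion $\check{\T}\subseteq \HL([Z])$ is automatic: for every $t\in\check{\T}$ the two linear subspaces sit inside $X_t$, so $[Z]$ remains algebraic and a fortiori Hodge. The heart of the matter is therefore the reverse dimension inequality at $0$, namely
\[
\dim T_0\HL([Z])\leq \dim \check{\T}.
\]

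For this I would use the Griffiths--Carlson infinitesimal description of the Hodge locus. Writing $f$ for the Fermat equation and $R=\C[x]/\jacob(f)$ for its Jacobian ring, the tangent space to $\T$ at $0$ is naturally identified with $R_d$, and $T_0\HL([Z])$ is the kernel of the linear map
\[
R_d\longrightarrow \bigoplus_\beta \C,\qquad \dot t\longmapsto \big(\langle [Z],\,\dot t\cdot \omega_\beta\rangle \big)_\beta,
\]
where $\omega_\beta$ runs over a monomial basis of the Hodge piece $F^{\frac{n}{2}+1}H^n_{\mathrm{prim}}(X)$ coming from residues of rational forms. By Carlson--Griffiths \cite{CarlsonGriffiths1980} each entry of this matrix is precisely an integral of the type evaluated in Theorem \ref{14out2016}, summed over the two cycles $\P^{\frac{n}{2}}_{a,b}$ and $\check{\P}^{\frac{n}{2}}_{\check a,\check b}$. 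The support identified in that theorem (only multi-indices whose exponents pair up to $d-2$ along the partition determined by $b$ contribute) reduces the rank problem to a combinatorial linear-algebra question on a prescribed subset of the monomial basis of $R_d$.

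On the other hand, $\codim_\T \check{\T}$ is an elementary parametric count: containing a fixed $\P^{\frac{n}{2}}$ imposes a linear subspace of conditions of known dimension on the coefficients of a degree-$d$ hypersurface, and containing a second $\P^{\frac{n}{2}}$ meeting the first in $\P^m$ imposes an additional set of conditions, independent modulo an inclusion--exclusion correction controlled by $m$. The proof then reduces to showing that, for each listed triple $(n,d,m)$, the rank of the period matrix above agrees with this parametric codimension; equivalently, that the only infinitesimal obstructions to keeping $[Z]$ Hodge are those forcing the hypersurface to contain both linear subspaces.

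The main obstacle is this rank matching. Although Theorem \ref{14out2016} gives each matrix entry in closed form, the matrices are large and their corank depends delicately on $n$, $d$ and $m$ -- which is precisely why only the enumerated triples can be handled and why a complete list is deferred to a sequel. I would exploit the symmetry of the Fermat point under $\mu_d^{n+2}\rtimes S_{n+2}$ to decompose $R_d$ into isotypic pieces and reduce the global rank problem to smaller block computations, each treated either by direct inspection or, for the larger listed cases, by a finite symbolic computation. Once the two dimensions agree, $\check{\T}$ is the unique irreducible component of $\HL([Z])$ through $0$, and scheme-theoretic reducedness is immediate from the tangent-space equality; the Zariski open $U$ is then obtained by discarding the other components. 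Finally, since $[\plc]$ is Hodge on all of $\T$ (as the restriction of a power of the hyperplane class), the classes $[Z]$ and $b[Z]+c[\plc]=[\check Z]$ cut out the same Hodge locus, so any deformation of $(X,\check Z)$ keeping $\check Z$ algebraic lies in $\HL([\check Z])\cap U=\check{\T}$, proving the last assertion.
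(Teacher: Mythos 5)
Your proposal follows essentially the same route as the paper: identify the Zariski tangent space of the Hodge locus of $[Z]$ with the kernel of the period matrix built from Theorem \ref{14out2016}, match its rank against the codimension of $\check\T$ (a finite, case-by-case computation for the listed triples, exactly the content of Proposition \ref{21032017-2}), conclude smoothness and reducedness, and dispose of the last claim by noting that the periods of $\plc$ over the relevant forms vanish so $[Z]$ and $b[Z]+c[\plc]$ define the same Hodge locus. The suggested isotypic decomposition under the Fermat symmetry group is only an organizational variant of the same computational verification, so the argument is correct and not substantially different from the paper's.
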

For larger $m$'s Theorem \ref{27.03.17} fails to be true and this is the main topic of the article \cite[Chapter 18]{ho13}.
The limitation in Theorem  \ref{27.03.17} is due to the fact that
a part of its proof  is rank computation of certain matrices, 
for which we use a computer, and  we do not know how to handle it for arbitrary $n$ and $d$. 
Theorem \ref{27.03.17} implies that  the parameter space $\check\T$ 
is an irreducible reduced component of the Hodge locus in the parameter space $\T$ of smooth hypersurfaces. 
Note that for $n=2$ the hypothesis on $\check Z$  is the same as to say that the equality  
holds in 
$\Pic(X)\otimes\Q$. By
deformation of a pair $(X,Z)$ we mean a proper flat family $g: {\cal X}\to (\C,0)$ with a closed 
subvariety ${\cal Z}\subset \cal X$ such that $g|_{\cal Z}$ is flat, 
$g^{-1}(0)=X$ and $g|_{\cal Z}^{-1}(0)=Z$. 

The Zarski open subset $U$ in Theorem \ref{27.03.17} may not contain the Fermat point as our choice
of $\P^\frac{n}{2}, \check\P^\frac{n}{2}$ for Fermat is very special, see \eqref{8mar2018} and 
\eqref{23j2019}. 
For large degree $d$, all linear cycles of dimension $\frac{n}{2}$ and inside the Fermat variety 
are of the form \eqref{22jan2019}, see \cite[\S 17.4]{ho13} and
so in order to have $0\in U$, we must verify a rank computation in \S \ref{justdance2017} 
for all possible pairs of such $\P^\frac{n}{2}, \check\P^\frac{n}{2}$.

S. Bloch in \cite{Bloch1972} proves 
variational Hodge  conjecture for semi-regular algebraic cycles which is a strong condition on 
algebraic cycles and it is not at all clear whether it  holds in  our situation.  
The only result in this direction is given in \cite{DK2016}, where the authors prove that 
any smooth projective variety $Z$ of dimension $\frac{n}{2}$ is a semi-regular sub-variety of a 
smooth projective hypersurface in  $\P^{n+1}$ of large enough degree. We can also prove similar
statements as in Theorem \ref{27.03.17} for complete intersections algebraic cycles, see \S\ref{8.03.2018}.

The strategy to prove results similar to  Theorem \ref{27.03.17} has been explained in 
the first author's book \cite[Chapters 17, 18]{ho13}.
The main tools 
are 1. the infinitesimal variation of Hodge structures (IVHS) 
developed by Carlson, Green, Griffiths and Harris  in \cite{CGGH1983} 2. A theorem of Carlson and Griffiths in 
\cite[page 7]{CarlsonGriffiths1980} which describes a Cech cohomology description
of the Griffiths' basis of the de Rham cohomology of smooth hypersurfaces, and it does not 
appear in the IVHS formulation (despite the fact that IVHS is originated from this article). 
3. the relation between  IVHS  and  the Zariski tangent space of Hodge loci as analytic spaces 4. and finally  
the computation of periods of linear cycles inside the Fermat variety,  see Theorem \ref{14out2016}. This is also the heart of our proof of  
Theorem \ref{27.03.17} which has inspired the title of the article. 
For a full exposition of old and new results on Hodge locus the reader is referred to Voisin's article \cite{voisinHL}.

\section{Infinitesimal variation of Hodge structures}
\label{28.03.2017}
Let $X\to\T$ be a family of smooth complex projective varieties, where $\T$ is irreducible and smooth. 
The main ingredient of the infinitesimal variation of Hodge structures (IVHS) at $0\in\T$ is the 
bilinear map
\begin{equation}
\label{IVHS-sept}
\TS_0\T \times   H^{\frac{n}{2}-1}(X_0,\Omega_{X_0}^{\frac{n}{2}+1}) \to   H^{\frac{n}{2}}(X_0,\Omega_{X_0}^{\frac{n}{2}})
\end{equation}
where $\TS_0\T$ is the tangent space of $\T$ at $0$. This gives us Voisin's 
$^0\bar\nabla$ map:
\begin{equation}
\label{badankoofte-1}
{^0\bar\nabla}: H^{\frac{n}{2}}(X_0,\Omega_{X_0}^{\frac{n}{2}})^{\vee}\to 
{\rm Hom} \left ( \TS_{0}\T,  
H^{\frac{n}{2}-1}(X_0,\Omega_{X_0}^{\frac{n}{2}+1})^{\vee}  \right ),
\end{equation}
where $\vee$ denotes the dual of a vector space.
A cycle $\delta_0\in H_n(X_0,\Q)$ satisfying 
$$
\mathlarger{\int}_{\delta_0}\omega=0,\ \ \forall  \omega\in F^{\frac{n}{2}+1}H^n_\dR(X_0)
$$ 
is called a Hodge cycle.  
For a Hodge cycle $\delta_0$, the integrations 
\begin{equation}
\label{21.03.2017}
\mathlarger{\int}_{\delta_0}\omega,\ \   \omega\in H^{\frac{n}{2}}(X_0,\Omega_{X_0}^{\frac{n}{2}})\cong 
\frac{F^{\frac{n}{2}}H_\dR^n(X_0)}{ F^{\frac{n}{2}+1}H^n_\dR(X_0) },\ \ \delta_0\in H_n(X_0,\Q), 
\end{equation}
are well-defined and so we get 
$\delta_0^{\podu}\in H^{\frac{n}{2}}(X_0,\Omega_{X_0}^{\frac{n}{2}})^{\vee}$. Moreover, 
$\ker( ^0\bar\nabla \delta_0^\podu )$ is the Zariski tangent space of the analytic space  $V_{\delta_0}$ with
\begin{equation}
\label{10maio16}
\O_{V_{\delta_0}}:=\O_{\T,0}\Bigg/\left\langle \mathlarger{\int}_{\delta_t}\omega_1, 
\mathlarger{\int}_{\delta_t}\omega_2, \cdots, \mathlarger{\int}_{\delta_t}\omega_a \right\rangle,
\end{equation}
at $0$,  where $\omega_1,\omega_2,\cdots,\omega_a$ are sections of 
the cohomology bundle  $H^n_\dR(X_t),\ t\in(\T,0)$ such that  
they form a basis of  $F^{\frac{n}{2}+1}H^n_\dR(X_t)$, and $\delta_t\in H_n(X_t,\Q)$ is the monodromy/parallel
transport of $\delta_0$ to $X_t$, see \cite[\S 5.3.2]{vo03}. 
The analytic space  $V_{\delta_0}$ is called the Hodge locus passing through $0$ and 
corresponding to $\delta_0$.
It might be non-reduced, see for instance \cite[Exercise 2, page 154]{vo03}.  For the full family of smooth 
hypersurfaces and $\plc$ as in Introduction, 
we have identifications
\begin{eqnarray} \label{12oct2014-1}
\TS_0\T & \cong & \C[x]_d\\ \label{12oct2014-2}
 H^{k}(X_t,\Omega_{X_t}^{n-k})   &\cong  &  (\C[x]/J)_{(k+1)d-n-2},\ \ k=0,1,\ldots,n, \ \ k\not=\frac{n}{2}
\end{eqnarray}
where $J:= \jacob(f_t)$ is the Jacobian ideal of the equation $f_t$ of $X_t$. 
For $k=\frac{n}{2}$,  $(\C[X]/J)_{(k+1)d-n-2}$ is identified with the codimension
one subspace $\ker[\plc]^\podu$ of $H^{k}(X_t,\Omega_{X_t}^{n-k})$, which is called 
the primitive part and it is in  the image of \eqref{IVHS-sept}.  
After these identifications, \eqref{IVHS-sept} is induced 
by the multiplication of polynomials. 

\section{Carlson-Griffiths theorem}
\label{6oct2017}
There can be many ways to compute hypercohomology groups. In this section in order to compute integrals \eqref{21.03.2017} we 
use a theorem of Carlson and Griffiths which gives a description of  
the algebraic de Rham cohomology of hypersurfaces using \v Cech cohomology. 
Let $X\subset\Pn {n+1}$ be a  smooth hypersurface of degree $d$ given by $f=0$. 
Recall that for a monomial $x^i=x_0^{i_0}x_1^{i_1}\cdots x_{n+1}^{i_{n+1}}$ of degree $(k+1)d-n-2$
$$
 \omega_i={\rm Residue}
 \left( 
\frac{x^i\cdot \Omega}
     {f^{k+1}}
\right)\in H^n_{\text{dR}}(X).
$$
where 
$\Omega:= \sum_{i=0}^{n+1}(-1)^ix_i\widehat{dx_i}$, $\widehat{dx_i}=dx_0\wedge dx_1\wedge\cdots dx_{i-1}\wedge dx_{i+1}\wedge\cdots \wedge dx_{n+1}$ 
and 
${\rm Residue}: H^{n+1}(\Pn {n+1}-X)\to  H^{n+2}(\Pn {n+1}, X)\cong H^n(X)$ 
is the   composition of the coboundary map with the Leray-Thom-Gysin isomorphism, 
see \cite[Chapter 4]{ho13}.
We say that $\omega_i$ has adjoint level $k$. Carlson and Griffiths in \cite{CarlsonGriffiths1980}
found an explicit expression for these forms in the algebraic de Rham cohomology of $X$ relative to 
the Jacobian covering $\mathcal{J}_X$ of $\Pn {n+1}$: 
$$
\mathcal{J}_X:=\{U_j,\ \ j=0,1,2,\cdots,n+1\},\ \ U_j:=\left\{\frac{\partial f}{\partial x_j}\neq 0\right\}.
$$
Since $X$ is smooth, this is a covering of $\Pn {n+1}$ and hence $X$ itself. 
For a vector field $Z$ in $\mathbb{C}^{n+2}$, let $\iota_Z$ denote the contraction of differential forms along $Z$ 
and for a  multi-index $j=(j_0,...,j_l)$ with $|j|:=l$ let 
\begin{eqnarray}
\Omega_j &:=& \iota_{\frac{\partial}{\partial x_l}}\circ  \iota_{\frac{\partial}{\partial x_{l-1}}}\circ \cdots \circ  \iota_{\frac{\partial}{\partial x_0}} \Omega\\
f_j &:=& \frac{\partial f}{\partial x_{j_0}}\cdot  \frac{\partial f}{\partial x_{j_1}} \cdots\frac{\partial f}{\partial x_{j_l}}.
\end{eqnarray}
\begin{theo}[Carlson-Griffiths, \cite{CarlsonGriffiths1980}, page 7]
\label{21jan2019}
 Let $\omega_i$ be a differential form of adjoint level $k$. 
 Then, in $F^{n-k}/F^{n-k+1}\cong H^k(X,\Omega_X^{n-k})$,  it is represented by the cocycle
 \begin{equation}
 \label{21dec2016}
 (\omega_i)^{n-k,k}=\frac{(-1)^{n(k+1)}}{k!}\left\{ \frac{x^i\Omega_j}{f_j}\right\}_{|j|=k}
 \end{equation} 
 with respect to the Jacobian covering.
\end{theo}
For the constant term in \eqref{21dec2016} see \cite[page 12]{CarlsonGriffiths1980}.  In order to be able to compute the integrals
of the present text explicitly and without any constant ambiguity, see Theorem \ref{14out2016}, we will need the following 
integration formula: 
\begin{equation}
 \mathlarger{\mathlarger{\int}}_{\Pn {n+1}} 
 \frac{\sum_{i=0}^{n+1}(-1)^ix_i\widehat{dx_i}}{x_0x_1\cdots x_{n+1}} =(-1)^{n+2\choose 2}(2\pi\sqrt{-1})^{n+1}. 
 \end{equation}
 The integrand  induces an element in the top algebraic de Rham cohomology $H^{2(n+1)}_\dR(\Pn {n+1})$ and   
 we have to use a canonical isomorphism between algebraic de Rham  and usual de Rham cohomology in order
 to write it as a $C^\infty$ $2(n+1)$ differential form. 
Since this will not play any role in the proof of Theorem \ref{19.03.2017} we skip its proof.

\section{Proof of Theorem \ref{14out2016}}
We prove Theorem \ref{14out2016} in the case, where $b$ is the
identity and all $a_i$'s are zero. 
In this case we simply write $\licy{\frac{n}{2}}=\licy{\frac{n}{2}}_{a,b} $.
Since $\licy{\frac{n}{2}}_{a,b}$'s are obtained by acting the automorphism group of the 
Fermat variety on a single linear cycle, the general formula easily follows.   
Let $\phi : \Pn {\frac{n}{2}}_{(y_1:\cdots:y_{\frac{n}{2}+1})}\rightarrow \Pn {n+1}_{(x_0:\cdots:x_{n+1})}$ be the immersion with the image $\licy{\frac{n}{2}}$ given by
$$
 \phi [y_1:\cdots :y_{\frac{n}{2}+1}]=[\zeta_{2d}y_1:y_1:\cdots :\zeta_{2d}y_{\frac{n}{2}+1}:y_{\frac{n}{2}+1}].
$$
We know from Carlson-Griffiths Theorem that 
\begin{equation}
(\omega_i)^{\frac{n}{2},\frac{n}{2}}= \frac{1}{\frac{n}{2}!}\left\{ \frac{x^i\Omega_j}{d^{\frac{n}{2}+1}(x_{j_0}x_{j_1}
 \cdots x_{j_{\frac{n}{2}}})^{d-1}} \right\}_{|j|=\frac{n}{2}}\in H^{\frac{n}{2}}(\mathcal{U}, \Omega^{\frac{n}{2}}),
\end{equation}
where $\mathcal{U}=\mathcal{J}_{X^d_n}$ is the standard covering of $\Pn {n+1}$ and for simplicity we have written
$\Omega^{k}= \Omega^k_{X^d_n}$. 
Therefore 
\begin{equation}
\label{2016-Bonn-1}
 \phi^*\omega_i=\frac{1}{d^{\frac{n}{2}+1}\cdot \frac{n}{2}!}
 \left\{ \frac{\zeta_{2d}^{i_0+i_2+\cdots+i_n}y^{i'}\phi^*\Omega_j}{\phi^*(x_{j_0}x_{j_1}\cdots x_{j_{\frac{n}{2}+1}})^{d-1}} \right\}_{|j|=\frac{n}{2}}\in H^{\frac{n}{2}}(\phi^{-1}(\mathcal{U}), \Omega^{\frac{n}{2}}),
\end{equation}
where $i'=(i_0+i_1,i_2+i_3,\cdots,i_n+i_{n+1})$ and $\phi^{-1}(\mathcal{U})$ is the open covering of $\Pn {\frac{n}{2}}$ given by 
the pre-images of the standard covering of 
$\Pn {n+1}$. Note that this covering has repeated open sets.  
Since for $\{k_1,k_2,\cdots, k_{\frac{n}{2}}\}\subset\{1,2,\cdots,\frac{n}{2}+1\}$ with $k_1<k_2<\cdots<k_{\frac{n}{2}}$ 
we have
$$
 (\phi^*\Omega_j)(\frac{\partial}{\partial y_{k_1}},\cdots,\frac{\partial}{\partial y_{k_{\frac{n}{2}}}})=
 $$
 $$
 \Omega(\frac{\partial}{\partial x_{j_0}},\cdots,\frac{\partial}{\partial x_{j_{\frac{n}{2}}}},\zeta_{2d}\frac{\partial}{\partial x_{2k_1-2}}+\frac{\partial}{\partial x_{2k_1-1}},\cdots,\zeta_{2d}\frac{\partial}{\partial x_{2k_{\frac{n}{2}}-2}}+\frac{\partial}{\partial x_{2k_{\frac{n}{2}}-1}}),
$$
it follows that if $\#(j\cap \{2l-2,2l-1\})=2$ for some $l\in \{1,2,\cdots,\frac{n}{2}+1\}$, then $\phi^*\Omega_j=0$. By abuse of notation here 
we have used $j$ for the set of its entries. 
On the other hand, if 
\begin{equation}
\label{10.04.2017}
\#(j\cap \{2l-2,2l-1\})=1,\ \ \ \forall l\in \{1,2,\cdots,\frac{n}{2}+1\}
\end{equation}
then 
$$
\phi^*\Omega_j(\frac{\partial}{\partial y_{k_1}},\cdots,\frac{\partial}
{\partial y_{k_{\frac{n}{2}}}})=\zeta_{2d}^{j_{\rm odd}}(-1)^{k+{\frac{n}{2} \choose 2} +j_{\rm odd}}y_k,
$$
where $k$ is the missing element, that is,  $\{k_1,\cdots,k_{\frac{n}{2}},k\}=
\{1,\cdots,\frac{n}{2}+1\}$ and $j_{\rm odd}:=\#\{0\leq i\leq \frac{n}{2},\ \ j_i \hbox{ is odd }\}$. Hence
\begin{equation}
\label{2016-Bonn-2}
 \phi^*\Omega_j=(-\zeta_{2d})^{j_{\rm odd}}(-1)^{{\frac{n}{2}\choose 2}+1}\Omega', \ \ \ \ \ \hbox{ where }\ \ \ \ \  
\Omega':=\sum_{k=1}^{\frac{n}{2}+1}(-1)^{k-1}y_k\hat{dy_k}.
\end{equation}
Since for such $j$ we have $\phi^*(x_{j_0}\cdots x_{j_{\frac{n}{2}}})^{d-1}
=\zeta_{2d}^{(d-1)(\frac{n}{2}+1-j_{\rm odd})}(y_1\cdots y_{\frac{n}{2}+1})^{d-1}$, 
replacing \eqref{2016-Bonn-2} in \eqref{2016-Bonn-1} we get 
\begin{equation}
\label{2016-Bonn-3}
\phi^*\omega_i=
\frac{(-1)^{\frac{n}{2}+1\choose 2}\zeta_{2d}^{\frac{n}{2}+1+i_0+i_2+\cdots+i_n}y^{i'}\Omega'}
{d^{\frac{n}{2}+1}\cdot\frac{n}{2}!(y_1\cdots y_{\frac{n}{2}+1})^{d-1}}
\in H^{\frac{n}{2}}(\mathcal{U}', \Omega^{\frac{n}{2}}),
\end{equation}
where $\mathcal{U}'$ is the standard covering of $\Pn {\frac{n}{2}}$. 
The form \eqref{2016-Bonn-3} is exact except for the cases in which  
$i'_{l}=d-2, \forall l\in \{1,\cdots,\frac{n}{2}+1\}$.
The result follows from the fact that the volume form $\frac{\Omega'}{y_1\cdots y_{\frac{n}{2}+1}}$ integrates $(-1)^{\frac{n}{2}+1\choose 2}(2\pi \sqrt{-1})^{\frac{n}{2}}$ over $\Pn {\frac{n}{2}}$.

\section{An elementary linear algebra problem}
\label{justdance2017}
\def\codnum{{\sf C}}
The remaining piece in the proof of Theorem \ref{27.03.17} is the following. 
For $N=d, \frac{n}{2}d-n-2$ and $(\frac{n}{2}+1)d-n-2$ let  
\begin{equation}
\label{21oct2014}
I_N:=\left \{ (i_0,i_1,\ldots,i_{n+1})\in {\mathbb Z}^{n+2}\Big| 0\leq i_e\leq d-2, \ \ i_0+i_1+\cdots+
i_{n+1}=N\right\}     
\end{equation}
We fix two  linear cycles
\begin{eqnarray}
 \label{8mar2018}
\P^\frac{n}{2}&=&\P^\frac{n}{2}_{a,b} \hbox{ with } a=(0,0,\cdots,0),\ b=(0,1,\cdots,n+1)\\ \label{23j2019}
\check \P^\frac{n}{2}&=&\P^\frac{n}{2}_{a,b} \hbox{ with } a=(\underbrace{0,0,\cdots,0}_{m+1 \hbox{ times}},
1,1,\cdots,1),\ b=(0,1,\cdots,n+1)
\end{eqnarray}
and for  $i\in I_{(\frac{n}{2}+1)d-n-2}$ we define the number 
\begin{equation}
\label{10a2017-2}
\pn_i:=\int_{\P^\frac{n}{2}}\omega_i +\int_{\check \P^\frac{n}{2}}\omega_i 
\end{equation}
where $\omega_i$ is the differential form inside the integral in  Theorem \ref{14out2016}. 
For any other \(i\) which is not in the set \(I_{(\frac{n}{2}+1)d-n-2}\), \(\pn_i\) by definition is 
zero.     Let \([\pn_{i+j}]\)  be the matrix whose rows and columns are indexed by 
\(i\in I_{\frac{n}{2}d-n-2}\) and \(j\in I_d\), 
respectively, and in its \((i,j)\) entry we have \(\pn_{i+j}\). 
For a sequence of natural numbers $\underline a=(a_1,\ldots,a_{s})$ let
us define
\begin{equation}
 \label{1julio2016-bonn}
 \codnum_{\underline{a}}=
 \bn{n+1+d}{n+1}-
 \sum_{k=1}^{s}(-1)^{k-1} \sum_{a_{i_1}+a_{i_2}+\cdots+a_{i_k}\leq d }\bn{n+1+d-a_{i_1}-a_{i_2}-\cdots-a_{i_k}}{n+1},
\end{equation}
where the second sum runs through all $k$ elements (without order) of $a_i,\ \ i=1,2,\ldots,s$. By abuse of notation we write 
$a^b:=\underbrace{a,a,\cdots,a}_{\hbox{$b$ times}}$. 
\begin{prop}
\label{21032017-2}
For the triples $(n,d,m)$ in Theorem \ref{27.03.17} we have 
\begin{equation}
\label{22.12.2016-2}
\rank([\pn_{i+j}])=2\codnum_{1^{\frac{n}{2}+1},(d-1)^{\frac{n}{2}+1}}-\codnum_{1^{n-m+1}, (d-1)^{m+1}}.
\end{equation}
\end{prop}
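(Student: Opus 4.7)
The strategy is to identify the matrix $[\pn_{i+j}]$ with the Gram matrix of the IVHS-induced pairing on the Jacobian ring, produce an upper bound on its rank from a natural ideal in the annihilator, and verify equality by direct computation. Set $R := \C[x]/\jacob(f_0)$. By the identifications \eqref{12oct2014-1}--\eqref{12oct2014-2} the matrix $[\pn_{i+j}]$ is the matrix of the bilinear pairing
$$\mu: R_d \times R_{\frac{n}{2}d - n - 2} \to \C, \qquad (P, Q) \mapsto \pn(PQ),$$
where $\pn$ is extended to a linear functional on $R_{(\frac{n}{2}+1)d-n-2}$ via Theorem \ref{14out2016}. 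Writing $K := \{P \in R_d : \pn(PQ) = 0 \text{ for all } Q \in R_{\frac{n}{2}d-n-2}\}$, we have $\rank[\pn_{i+j}] = \dim R_d - \dim K$.

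\emph{Upper bound.} For each cycle $\P^{\frac{n}{2}}_k$ ($k=1,2$) with defining linear forms $\ell^{(k)}_1, \ldots, \ell^{(k)}_{\frac{n}{2}+1}$, let $I_k \subset R$ be the ideal these generate. Using the Carlson-Griffiths cocycle \eqref{21dec2016} and the fact that $\phi_k^* \ell_i^{(k)} = 0$ (where $\phi_k$ parametrizes $\P^{\frac{n}{2}}_k$, as in \S4), each cocycle representing $\omega_{\ell_i^{(k)} h}$ pulls back to zero on $\P^{\frac{n}{2}}_k$. Thus $(I_k)_d$ is contained in the annihilator of the functional $\pn^{(k)}(\cdot) := \int_{\P^{\frac{n}{2}}_k}$, and hence $(I_1 \cap I_2)_d \subseteq K$. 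The dimension of the intersection can then be computed from the short exact sequence
$$0 \to I_1 \cap I_2 \to I_1 \oplus I_2 \to I_1 + I_2 \to 0,$$
which yields $\dim (I_1 \cap I_2)_d = 2\dim (I_k)_d - \dim (I_1 + I_2)_d$. Modding $R$ out by the $\frac{n}{2}+1$ linear forms of a single cycle makes $\frac{n}{2}+1$ of the Jacobian generators $x_i^{d-1}$ redundant (since $x_{b_{2e-2}}^{d-1}$ becomes a scalar multiple of $x_{b_{2e-1}}^{d-1}$), giving $\dim(R/I_k)_d = \codnum_{1^{\frac{n}{2}+1}, (d-1)^{\frac{n}{2}+1}}$. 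For the sum $I_1 + I_2$, the union of defining forms cuts out $\P^{\frac{n}{2}} \cap \check\P^{\frac{n}{2}} = \P^m$ via $n+1-m$ independent linear relations; in the quotient, the $n-2m$ Jacobian generators attached to coordinates that vanish on $\P^m$ become trivial, while the remaining $2(m+1)$ pair up into $m+1$ independent generators of degree $d-1$, so $\dim(R/(I_1 + I_2))_d = \codnum_{1^{n-m+1}, (d-1)^{m+1}}$. Inclusion-exclusion then produces the upper bound
$$\rank[\pn_{i+j}] \leq 2\codnum_{1^{\frac{n}{2}+1}, (d-1)^{\frac{n}{2}+1}} - \codnum_{1^{n-m+1}, (d-1)^{m+1}}.$$

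\emph{Lower bound and main obstacle.} The reverse inequality requires $K = (I_1 \cap I_2)_d$, i.e.\ ruling out ``unexpected'' $P \in R_d \setminus (I_1 \cap I_2)$ with $\pn^{(1)}(P \cdot Q) = -\pn^{(2)}(P \cdot Q)$ for every $Q$. This is the crux of the proof and the reason the statement is restricted to the list of triples in Theorem \ref{27.03.17}: by Theorem \ref{14out2016} each entry $\pn^{(k)}_i$ is a single root of unity times a fixed scalar, so such cancellations are a priori possible, and no formal argument excludes them (indeed, for larger values of $m$ they do occur, as indicated in \cite{WhyShouldOne}). For the finite list of triples $(n,d,m)$, the plan is to assemble the matrix $[\pn_{i+j}]$ entry-by-entry from the explicit formula of Theorem \ref{14out2016} and compute its rank in a computer algebra system such as \textsf{Singular}, checking numerically that it matches the closed-form right-hand side. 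This final machine computation is the main obstacle, and it is precisely what bounds the range of $(n,d,m)$ for which Proposition \ref{21032017-2} (and therefore Theorem \ref{27.03.17}) is established here.
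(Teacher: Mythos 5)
Your proposal is correct and, at its decisive step, identical to the paper's: Proposition \ref{21032017-2} is proved there purely by assembling $[\pn_{i+j}]$ from the explicit periods of Theorem \ref{14out2016} and computing its rank by computer (the {\sc Singular} procedures {\tt PeriodsLinearCycle}, {\tt Matrixpij}, {\tt CodComIntZar}) for the listed triples $(n,d,m)$. Your preliminary ideal-theoretic upper bound is extra scaffolding not part of the paper's proof of this proposition (it roughly corresponds to the later remark that the right-hand side is $\codim_\T\check\T$ via the Koszul complex, together with the containment of the tangent space of $\check\T$ in $\ker[\pn_{i+j}]$), and since the rank is computed outright it is not logically needed.
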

\begin{proof}
We verify Proposition \ref{21032017-2} by a computer. For this the reader may  download
{\tt foliation.lib}\footnote{\tt http://w3.impa.br/$\sim$hossein/foliation-allversions/foliation.lib} 
from the the first author's web page, run {\sc Singular} (see \cite{GPS01}), and
type
\begin{verbatim}
LIB "foliation.lib";
Example SumTwoLinearCycle; \end{verbatim}
Modifying $n,d,m$ arguments in the example session of the procedure {\tt SumTwoLinearCycle} one gets 
all the cases in 
Theorem \ref{27.03.17}. 
The procedures  {\tt PeriodsLinearCycle, Matrixpij, CodComIntZar} of the library {\tt foliation.lib} are
used for this verification.   
\end{proof}

\section{IVHS, periods and the proof of Theorem \ref{27.03.17}}
Let us consider the family of hypersurface $X_t$ in the usual projective space 
$\Pn {n+1}$ given by the homogeneous polynomial:
\begin{equation}
\label{15dec2016}
f_t:=x_0^{d}+x_1^{d}+\cdots+x_{n+1}^d-\sum_{j}t_j x^j=0,\ \ 
\end{equation}
$$
t=(t_j)_{j\in I}\in(\T,0), 
$$
where $x^j$ runs through $j\in I_d$. In a Zariski neighborhood of the Fermat variety, and up 
to linear transformations of $\Pn {n+1}$,   every 
hypersurface  can be written in this
format. In other words, the parametr space in \eqref{15dec2016} is transversal to the  
${\rm PGL}(n+2)$-orbits near $0$ of $\T$ in  the introduction and its projection in $\T/ {\rm PGL}(n+2)$ is etale 
near $0$.
We choose basis $x^i\in I_d$, $x^i,\ \ i\in I_{\frac{n}{2}d-n-2}, x^i, \ i\in I_{(\frac{n}{2}+1)d-n-2}$ for
$\TS_0\T$, $H^{\frac{n}{2}-1}(X_0,\Omega_{X_0}^{\frac{n}{2}+1})$ and $H^{\frac{n}{2}}(X_0,\Omega_{X_0}^{\frac{n}{2}})$, respectively.
For a Hodge cycle $\delta_0\in H_n(X_0,\Q)$, 
we write $^0\bar\nabla \delta_0^\podu$ in the above basis  and we get the matrix $[\pn_{i+j}]$, where 
$
\pn_i:=\int_{\delta_0}\omega_i
$
are the periods of $\delta_0$. This matrix has been computed for the first time in \cite{GMCD-NL}. 
For $\delta_0:=[Z_0],\ \ Z_0:=\P^{\frac{n}{2}}+ \check \P^{\frac{n}{2}}$, Theorem \ref{14out2016} gives us an explicit 
formula for the periods $\pn_i$ in \eqref{10a2017-2}.  
 Using Koszul complex one can easily see that
the right hand side of \eqref{22.12.2016-2} is the codimension of $\check \T$ in $\T$, 
see \cite[\S 17.9]{ho13}.
Knowing that $\ker [\pn_{i+j}]$ is the Zariski tangent space of the analytic space $V_{[Z_0]}$ and 
the local branch of  $(\check\T,0)$ corresponding to deformations of $Z_0$ is inside the underlying analytic 
variety of $V_{[Z_0]}$, Proposition \ref{21032017-2} implies  
that $V_{[Z_0]}$ is smooth and reduced and its underlying analytic variety  is an open subset of 
the algebraic variety $\check\T$.  Therefore, the restriction on $n$ and $d$ in our main theorem comes from
the fact that we can prove Proposition \ref{21032017-2} for the special cases of $(n,d,m)$ announced in 
Theorem \ref{19.03.2017}. 

 Since $\omega_i,\ i\in I_{\frac{n}{2}(d-2)-n-2}$ form
a basis of the primitive part of $F^{\frac{n}{2}}/F^{\frac{n}{2}+1}$ of $H^n_\dR(X)$, all the periods of 
$\plc$ are zero.
This implies that for two Hodge cycles $\delta_0,\tilde\delta_0\in H_n(X_0,\Q)$ such that  
$\delta_0=b\tilde \delta_0+c[\plc]=0$ for some 
$b,c\in\Q,\ b\not=0$, we have  $V_{\delta_0}=V_{\tilde\delta_0}$. For  
$\delta_0=[\check Z]$ and $\tilde \delta_0=[Z]$, this implies
the second part in Theorem \ref{19.03.2017}.

\section{Complete intersection algebraic cycles}
\label{8.03.2018}
Let $\C[x]_d=\C[x_0,x_1,\cdots,x_{n+1}]_d$ be the set of  homogeneous polynomials of degree $d$ in $n+2$ variables.
Assume that $n\geq 2$ is even and  $f\in \C[x]_d$ is of the following format:
\begin{equation}
\label{24.03.2017}
f=f_1f_{\frac{n}{2}+2}+f_2f_{ \frac{n}{2}+3}+\cdots+ f_{\frac{n}{2}+1}f_{n+2},\ \ f_i\in\C[x]_{d_i},\ \ f_{\frac{n}{2}+1+i}\in\C[x]_{d-d_i},
\end{equation}
where $1\leq d_i<d,\ i=1,2,\ldots, {\frac{n}{2}+1}$ is a sequence of natural numbers. 
Let $X\subset \Pn {n+1}$ be the hypersurface given by $f=0$ and $Z\subset X$ 
be the algebraic cycle given by $f_1=f_2=\cdots=f_{\frac{n}{2}+1}=0$. We call $Z$ a complete intersection algebraic cycle in $X$. 
The Fermat variety  has many of such algebraic cycles. 
Let $\T$ be the open subset of $\C[x]_d$ parameterizing smooth hypersurfaces of degree $d$ and $\T_{\underline{d}}\subset \T$ be its subset parameterizing those with \eqref{24.03.2017}.
We use the notation $X_t,\ t\in\T$ and denote by $0\in\T$ the point corresponding to Fermat variety. 
As another  corollary of Theorem \ref{14out2016} we get:   
\begin{theo}
\label{19.03.2017}
Let consider the following cases: 
\begin{enumerate}
 \item 
 $d\geq 2+\frac{4}{n}$ and $d_1=d_2=\cdots=d_{\frac{n}{2}+1}=1$, 
 \item
 $n=2, 4\leq d\leq 15$,
 \item 
 $n=4$ and $3\leq d\leq 6$,
 \item 
 $n=6$ and $3\leq d\leq 4$.
\end{enumerate}
In all these cases, except the first one,  all possible $\underline{d}$ is considered. 
There is a Zariski open (and hence dense) subset $U$ of $\T_{\underline{d}}$ such that for all $t\in U$ and  
a complete intersection algebraic cycle $Z\subset X:=X_t$ as above, 
deformations of $Z$ as an algebraic cycle and Hodge cycle are the same. 
\end{theo}
 The property in Theorem \ref{19.03.2017} is actually verified for the Fermat hypersurface 
with one of its complete intersection algebraic cycles.  
Actually, for the first case in Theorem \ref{19.03.2017} we prove that the local analytic branches of $\T_{\underline{d}}$ near 
the Fermat point are smooth and reduced. For the rest we prove this property at least for one branch.

When the first draft of this article was written, we got to know the preprint \cite[Theorem 1.1]{Dan-2014} 
in which the author states  
Theorem \ref{19.03.2017} for  arbitrary $d$. The exposition in this article can be improved, for instance 
the assumption $d>deg(Z)$ in the statement of Theorem 1.1 can be removed. The main ingredient
in this theoretical proof is Macaulay's theorem which is missing in our computational proof. We highlight that 
the advantage of our computational proof is that it works for other algebraic cycles which are not complete intersections, see Theorem \ref{27.03.17},
whereas the proof in \cite{Dan-2014} only works for complete intersections. 
The disadvantage is that one has to work with special values of $d$ and $n$ and it proves Theorem \ref{19.03.2017} 
for hypersurfaces  in a Zariski open subset of $\T_{\underline{d}}$. 
We note that  
the main result in \cite{Otwinowska2003}   implies Theorem \ref{19.03.2017} for very large degrees, however,
the lower bound in this article is not explicit and cannot be applied for a given degree.

For $n=2$ the Hodge locus is also called Noether-Lefschetz locus, and for $d_1=d_2=1$ one can even say more, 
that is namely, $\T_{1,1}$
is the only component of the Noether-Lefschetz locus with codimension $d-3$, see \cite{voisin1988, green1989}. 
For a similar statement for the case $n=2, d_1=1,d_2=2$ see \cite{voisin89}. We do not deal with this issue in this article. 
The first case in Theorem \ref{19.03.2017} is proved in \cite{GMCD-NL} and we give a new proof of this. 
The limitation in other cases is due to the fact that
a part of the proof of Theorem \ref{19.03.2017},  see Conjecture \ref{21032017} below,  
is an elementary problem in 
linear algebra, 
for which we use a computer, and apart from the first case, we do not know how to solve it in general.

The proof of Theorem \ref{19.03.2017} is similar to Theorem \ref{27.03.17}. Proposition \ref{21032017-2} is replaced with the following. Let   
\begin{equation}
\label{titular2017}
\check I_{(\frac{n}{2}+1)d-n-2}:=\left \{i\in I_{(\frac{n}{2}+1)d-n-2}\Big|i_{2l-2}+i_{2l-1}=d-2, \forall l=1,\cdots,\frac{n}{2}+1\right \}.
\end{equation}
Let also $B_1, B_2,\cdots, B_{\frac{n}{2}+1}$ be 
subsets of $\{\zeta\in \C  | \zeta^d+1=0\}$ with cardinalities $d_1,d_2,\ldots,d_{\frac{n}{2}+1}$, respectively. For 
$i\in \check I_{(\frac{n}{2}+1)d-n-2}$ we define the number 
\begin{equation}
\label{10a2017}
\pn_i:=\prod_{k=0}^{{\frac{n}{2}}}\sum_{\zeta\in B_{k+1}}\zeta^{i_{2k}+1}.
\end{equation}
For any other \(i\) which is not in the set \(\check I_{(\frac{n}{2}+1)d-n-2}\), \(\pn_i\) by definition is zero.    
\begin{conj}
\label{21032017}
We have 
\begin{equation}
\label{22.12.2016}
\rank([\pn_{i+j}])=\codnum_{d_1,d_2,\ldots,d_{\frac{n}{2}+1},d-d_1,d-d_2,\ldots,d-d_{\frac{n}{2}+1}}
\end{equation}
where the number in the right hand side is defined in \eqref{1julio2016-bonn}. 
\end{conj}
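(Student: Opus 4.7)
The plan is to interpret the matrix $[\pn_{i+j}]$ as the matrix of the Voisin map $^0\bar\nabla [Z]^{\podu}$ at the Hodge class of a suitable complete intersection cycle $Z$, bound its rank from above by a Koszul calculation, and then handle the matching lower bound either via Macaulay's theorem (as in \cite{Dan-2014}) or by exploiting the product structure of the $\pn_i$.

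First I would identify $[\pn_{i+j}]$ as a period matrix. Fix subsets $A_1,\dots,A_{\frac{n}{2}+1}\subset\{0,1,\dots,d-1\}$ of sizes $d_1,\dots,d_{\frac{n}{2}+1}$ and let $B_l:=\{\zeta_{2d}^{1+2a}:a\in A_l\}$. By Theorem \ref{14out2016} the integral over a single linear cycle $\licy{\frac{n}{2}}_{a,b}$, with $b$ the identity permutation, equals on the support $\check I_{(\frac{n}{2}+1)d-n-2}$ the character $\zeta_{2d}^{\sum_e(i_{2e}+1)(1+2a_{2e+1})}$ times a universal nonzero prefactor. Summing these characters over $a_{2e+1}\in A_{e+1}$ exhibits the product expression $\pn_i=\prod_{k=0}^{\frac{n}{2}}\sum_{\zeta\in B_{k+1}}\zeta^{i_{2k}+1}$ as, up to normalization, the period of the $\Z$-linear combination of linear cycles $Z=\sum_a\pm\licy{\frac{n}{2}}_{a,b}$ running over the prescribed $A_l$. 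By \S\ref{28.03.2017}, $[\pn_{i+j}]$ is then the matrix of $^0\bar\nabla [Z]^\podu$ in the monomial bases, and its kernel on the $\C[x]_d$ side is the Zariski tangent space $T_0 V_{[Z]}$ of the Hodge locus.

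Next I would establish the upper bound $\rank [\pn_{i+j}]\leq\codnum$ geometrically. The cycle $Z$ deforms algebraically in the locus $\T_{\underline d}$ of hypersurfaces of the form \eqref{24.03.2017}, so $T_0\T_{\underline d}\subseteq\ker[\pn_{i+j}]$. Under the generic assumption that $f_1,\dots,f_{n+2}$ form a regular sequence, $T_0\T_{\underline d}$ equals the degree $d$ piece of the ideal $(f_1,\dots,f_{n+2})$ and the Koszul complex gives
\begin{equation*}
\codim_{\C[x]_d} T_0\T_{\underline d}=\dim_{\C}(\C[x]/(f_1,\dots,f_{n+2}))_d=\codnum_{d_1,\dots,d_{\frac{n}{2}+1},d-d_1,\dots,d-d_{\frac{n}{2}+1}}
\end{equation*}
via the inclusion-exclusion \eqref{1julio2016-bonn}. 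This immediately yields $\rank[\pn_{i+j}]\leq\codnum$.

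The hard step is the reverse inequality. I would exploit the tensor structure $\pn_i=\prod_k q_k(i_{2k})$, with $q_k(s):=\sum_{\zeta\in B_{k+1}}\zeta^{s+1}$, which respects the partition of $\{x_0,\dots,x_{n+1}\}$ into pairs $\{x_{2l-2},x_{2l-1}\}$, to factor the pairing into $\frac{n}{2}+1$ two-variable pairings each controlled by the character theory of $q_l$. The cleanest completion follows \cite{Dan-2014}: applying Macaulay's theorem to the regular sequence $(f_1,\dots,f_{\frac{n}{2}+1})$ identifies the cokernel of the multiplication map with the graded piece of a Koszul cohomology of dimension precisely $\codnum$, closing the inequality. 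The main obstacle, and the reason the present paper reverts to a computer check of Conjecture \ref{21032017} over the finite list of $(n,d,\underline d)$ in Theorem \ref{19.03.2017}, is the entanglement between the local product structure of $\pn_i$ and the global constraints $|i|=\frac{n}{2}d-n-2$, $|j|=d$, $i_k,j_k\leq d-2$ defining the row and column bases: although the Jacobian ideal $J=(x_0^{d-1},\dots,x_{n+1}^{d-1})$ respects the pair decomposition, these global degree conditions obstruct a clean K\"unneth decomposition and force the use of Macaulay's theorem as the structural bridge (a bridge already implicit in the case $d_l=1$ treated in \cite{GMCD-NL}).
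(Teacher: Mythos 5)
This statement is labeled a conjecture in the paper for a reason: the authors have no general proof. Their ``proof'' consists of a computer verification (the {\sc Singular} routines cited in the proof of Proposition \ref{21032017-2}) for the finitely many $(n,d,\underline d)$ appearing in Theorem \ref{19.03.2017}, together with a genuinely theoretical argument only in the special case $d_1=\cdots=d_{\frac{n}{2}+1}=1$, where they exhibit explicit rows $\pn_{i+\bullet}$, $i\in A$, prove they are linearly independent using the product formula \eqref{10a2017}, and show every other row is a scalar multiple of one of them. Your proposal reproduces, in outline, only the ``easy'' half: the identification of $[\pn_{i+j}]$ with the period matrix of the complete intersection cycle (which is indeed how the paper uses the conjecture, via Theorem \ref{14out2016}) and the upper bound $\rank[\pn_{i+j}]\le {\sf C}_{d_1,\ldots,d-d_{\frac{n}{2}+1}}$ coming from $T_0\T_{\underline d}\subseteq\ker[\pn_{i+j}]$ and the Koszul computation of the codimension of $\T_{\underline d}$. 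That part is consistent with what the paper does in the last section, modulo bookkeeping about the monomials omitted from $I_d$.

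The genuine gap is the reverse inequality, which is the entire content of the conjecture beyond the tangent-space inclusion. You do not prove it: you assert that ``the cleanest completion follows \cite{Dan-2014} by Macaulay's theorem,'' while simultaneously conceding that the interaction between the product structure of $\pn_i$ and the global degree constraints blocks a clean argument --- which is precisely why the authors resort to a computer. Moreover, invoking \cite{Dan-2014} here is close to circular: Dan's statement is the variational Hodge conjecture for complete intersection cycles, whereas the rank equality \eqref{22.12.2016} is a first-order (scheme-theoretic) statement equivalent to smoothness and reducedness of the Hodge locus $V_{[Z_0]}$ at the Fermat point; extracting the tangent-space equality from a deformation-theoretic statement requires exactly the Macaulay-type computation you leave unperformed (and the semi-regularity route of \cite{Bloch1972}, \cite{DK2016} is not available for these singular cycles in low degree). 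If you want a proof in a genuinely tractable case, the model to follow is the paper's own Proposition for $d_l\equiv 1$: make your ``tensor structure'' idea concrete by producing, for general $d_l$, an explicit subset of rows of cardinality equal to the right hand side of \eqref{22.12.2016} that spans the row space; as it stands, that step is missing.
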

We can verify Conjecture \ref{21032017} by a computer for $n$ and $d$ given in item 2 of Theorem \ref{19.03.2017}.  
The only theoretical proof that we have is the following. 
\begin{prop}
For the case $d_1=d_2=\cdots=d_{\frac{n}{2}+1}=1$ we have 
$$
\rank[\pn_{i+j}]={\frac{n}{2}+d\choose d}-(\frac{n}{2}+1)^2.
$$
\end{prop}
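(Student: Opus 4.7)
The plan is to recognize $\pn$ in this very symmetric case as the socle evaluation of an Artinian Gorenstein ring pulled back by a linear substitution, and then read off the rank by graded Macaulay duality. Write $B_l=\{\zeta_l\}$ with $\zeta_l^d=-1$, and introduce the graded $\C$-algebra homomorphism
\begin{equation*}
\psi:\C[x_0,x_1,\dots,x_{n+1}]\longrightarrow \C[y_1,\dots,y_{\frac{n}{2}+1}],\qquad x_{2l-2}\mapsto \zeta_l y_l,\ \ x_{2l-1}\mapsto y_l,
\end{equation*}
so that $\psi(x^i)=\prod_l \zeta_l^{i_{2l-2}}\, y_l^{i_{2l-2}+i_{2l-1}}$. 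Let $\pi_{\mathrm{top}}$ be the linear functional on $\C[y]_{(\frac{n}{2}+1)(d-2)}$ extracting the coefficient of $y_1^{d-2}\cdots y_{\frac{n}{2}+1}^{d-2}$. For any multi-index $i$ with $\sum_e i_e=(\frac{n}{2}+1)d-n-2$, both $\pn_i$ and $c\cdot \pi_{\mathrm{top}}(\psi(x^i))$ vanish outside $\check I_{(\frac{n}{2}+1)d-n-2}$ (because otherwise some $y_l$-degree of $\psi(x^i)$ differs from $d-2$) and coincide on $\check I_{(\frac{n}{2}+1)d-n-2}$, with $c:=\prod_l \zeta_l\ne 0$. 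This establishes the clean factorization $\pn=c\cdot \pi_{\mathrm{top}}\circ\psi$.

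Since $\psi$ carries the Fermat Jacobian ideal $J=(x_e^{d-1})$ into $J_y:=(y_l^{d-1})$, it descends to graded homomorphisms $V_N=(\C[x]/J)_N\to (\C[y]/J_y)_N$ which are surjective in every degree (split by $y_l\mapsto x_{2l-1}$). Consequently the bilinear form $M(Q,P)=\pn(QP)$ factors through $\psi\times\psi$ as the multiplication-and-socle pairing
\begin{equation*}
(\C[y]/J_y)_d\times (\C[y]/J_y)_{\frac{n}{2}d-n-2}\longrightarrow (\C[y]/J_y)_{(\frac{n}{2}+1)(d-2)}\cong\C.
\end{equation*}
Now $\C[y]/J_y$ is an Artinian complete intersection, hence Gorenstein with one-dimensional socle in degree $(\frac{n}{2}+1)(d-2)=(\frac{n}{2}+1)d-n-2$, so the displayed pairing is perfect. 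Pulling back by a surjection preserves the rank of a bilinear form, so
\begin{equation*}
\rank[\pn_{i+j}]=\dim_{\C}(\C[y]/J_y)_d.
\end{equation*}

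To finish, the Hilbert series $\bigl((1-t^{d-1})/(1-t)\bigr)^{\frac{n}{2}+1}$ identifies $\dim(\C[y]/J_y)_d$ with the number of $\alpha\in\Z_{\ge 0}^{\frac{n}{2}+1}$ satisfying $\sum_l\alpha_l=d$ and $\alpha_l\le d-2$. The standing hypothesis $d\ge 2+4/n$ forces $d\ge 3$, so no two coordinates can simultaneously exceed $d-2$, and a single inclusion-exclusion gives $\binom{\frac{n}{2}+d}{d}-(\frac{n}{2}+1)^2$, as claimed. The genuine content of the proof is concentrated in the first step: the hypothesis $|B_l|=1$ is exactly what allows $\pn$ to be the pull-back of the socle of a complete intersection along a single linear substitution. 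For general $d_l\ge 2$ no such substitution exists, $\pn$ becomes a sum of several such socle functionals, and Gorenstein duality no longer applies directly; this is the obstacle our method sidesteps only in the present singleton case, and it is precisely the source of the general open Conjecture \ref{21032017}.
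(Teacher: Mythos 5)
Your argument is correct, but it takes a genuinely different route from the paper's. The paper proves the proposition by bare-hands row reduction: it singles out the rows indexed by $A=\{i\in I_{\frac{n}{2}d-n-2}\mid i_0=i_2=\cdots=i_n=0\}$ and the columns indexed by $B=\{j\in I_d\mid j_0=j_2=\cdots=j_n=0\}$, notes that the submatrix $[\pn_{i+j}]_{A\times B}$ is (up to a nonzero constant) a permutation matrix via the bijection $\phi(j)_{2l-2}=0$, $\phi(j)_{2l-1}=d-2-j_{2l-1}$, and then checks that every remaining row is either zero or a nonzero scalar multiple of a row indexed by $A$; the rank is then $\#A=\#B$, which is exactly the monomial count you perform at the end. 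You instead factor the functional $\pn$ as $c\cdot\pi_{\mathrm{top}}\circ\psi$ through the substitution $x_{2l-2}\mapsto\zeta_l y_l$, $x_{2l-1}\mapsto y_l$, and then invoke perfectness of the multiplication-into-socle pairing for the Artinian monomial complete intersection $\C[y]/(y_1^{d-1},\ldots,y_{\frac{n}{2}+1}^{d-1})$, plus the fact that pulling a bilinear form back along surjections preserves its rank, to identify $\rank[\pn_{i+j}]$ with the Hilbert function value $\dim(\C[y]/J_y)_d$. The steps check out: both sides of your factorization vanish off $\check I_{(\frac{n}{2}+1)d-n-2}$ and agree on it; the restricted maps from the spans of the monomials $x^i$, $i\in I_{\frac{n}{2}d-n-2}$ and $i\in I_d$, do surject onto the corresponding graded pieces of $\C[y]/J_y$ (your splitting $y_l\mapsto x_{2l-1}$ keeps exponents $\leq d-2$); and the inclusion--exclusion count needs $d\geq 3$, which, like the paper, you get from the standing hypothesis $d\geq 2+\frac{4}{n}$ of Theorem \ref{19.03.2017} (also needed so that $\frac{n}{2}d-n-2\geq 0$). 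As for what each approach buys: the paper's proof is elementary and self-contained, of the same explicit nature as the computer verification of Conjecture \ref{21032017} in the remaining cases; yours is structural, explaining why the rank is a Hilbert function of a complete intersection, and it imports precisely the Macaulay/Gorenstein duality ingredient that the paper itself points out is ``missing in our computational proof''---it would apply verbatim whenever $\pn$ is a single socle functional pulled back along a linear substitution. As you correctly note, neither route extends to $d_l\geq 2$, where $\pn$ becomes a sum of such functionals; that is exactly the content of Conjecture \ref{21032017}.
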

\begin{proof}
Let 
$$
A:=\{i\in I_{\frac{n}{2}d-n-2}|i_0=i_2=\cdots=i_n=0\},
$$
$$
B:=\{j\in I_d|j_0=j_2=\cdots=j_n=0\}.
$$
Consider the map $\phi:B\rightarrow A$ given by $\phi(j)_{2l-2}=0$, $\phi(j)_{2l-1}=d-2-j_{2l-1}$, for $l=1,\cdots,\frac{n}{2}+1$. 
It is easy to see that $\phi$ is a bijection and  
$$
\#A=\#B={\frac{n}{2}+d\choose d}-(\frac{n}{2}+1)^2.
$$
We claim 
that the rows $\pn_{i+\bullet}, i\in A$ form a base for the image of $[\pn_{i+j}]$. Indeed, since for $(i,j)\in A\times B$
$$
\pn_{i+j} =
\left\{
	\begin{array}{ll}
		1  & \mbox{if } i=\phi(j), \\
		0 & \mbox{otherwise, } 
	\end{array}
\right.
$$
it follows that these rows are linearly independent. To see that they generate the image, it is enough to show that 
they generate all the rows. For $i\in I_{\frac{n}{2}d-n-2}$ if $i_{2l-2}+i_{2l-1}>d-2$ for 
some $l\in\{1,\cdots,\frac{n}{2}+1\}$, 
then $\pn_{i+\bullet}=0$. If not then there exists a unique $j\in B$ such that  $i+j\in \check I_{(\frac{n}{2}+1)d-n-2}$. In fact  
$j_{2l-2}=0$, $j_{2l-1}=d-2-i_{2l-2}-i_{2l-1}$, for $l=1,\cdots,\frac{n}{2}+1$. We claim that 
$$
\pn_{i+\bullet}=\zeta_{2d}^{i_0+i_2+\cdots+i_n}\cdot\pn_{\phi(j)+\bullet}.
$$
For $h\in I_d$  with $\pn_{\phi(j)+h}=0$ we have $\phi(j)+h\notin \check I_{(\frac{n}{2}+1)d-n-2}$ and  so there exists  
$l\in\{1,\cdots,\frac{n}{2}+1\}$ such that 
$$
\phi(j)_{2l-2}+\phi(j)_{2l-1}+h_{2l-2}+h_{2l-1}>d-2.
$$
Since $\phi(j)_{2l-2}+\phi(j)_{2l-1}=i_{2l-2}+i_{2l-1}$,  it follows that $\pn_{i+h}=0$. 
On the other hand, for  $h\in I_d$ with $\phi(j)+h\in \check I_{(\frac{n}{2}+1)d-n-2}$, we have  $i+h\in \check I_{(\frac{n}{2}+1)d-n-2}$ and
$$
\pn_{i+h}=\zeta_{2d}^{(i_0+h_0)+\cdots+(i_n+h_n)}=\zeta_{2d}^{i_0+\cdots+i_n}\cdot \zeta_{2d}^{h_0+\cdots+h_n}=\zeta_{2d}^{i_0+\cdots+i_n}\cdot \pn_{\phi(j)+h}.
$$
\end{proof}
Let 
\begin{equation}
Z_0: \prod_{\zeta\in B_1}(x_{0}-\zeta x_1)= \prod_{\zeta\in B_2}(x_2-\zeta x_3)=\cdots= \prod_{\zeta\in B_{\frac{n}{2}+1}}(x_{n}-
\zeta x_{n+1})=0, 
\end{equation}
where $B_i$'s are as in \S\ref{justdance2017}. 
For $\delta_0:=[Z_0]$  Theorem \ref{14out2016} implies that 
up to multiplication by a constant which does not depend on $i$ we have $\int_{Z}\omega_i=\pn_i$,  
where $\pn_i$ is defined in \eqref{10a2017}.  
 Using Koszul complex one can easily see that
the left right hand side of \eqref{22.12.2016} is the codimension of $\T_{\underline d}$ in $\T$, see \cite[Chapter 17]{ho13}.
The rest of the argument is similar to the proof of Theorem \ref{27.03.17}. Note that  the restriction on $n$ and $d$ in our main theorem comes from
the fact that we can prove Conjecture \ref{21032017} for the special cases of $n$ and $d$ announced in Theorem \ref{19.03.2017}.

\def\cprime{$'$} \def\cprime{$'$} \def\cprime{$'$} \def\cprime{$'$}


\end{document}